\documentclass[12pt]{article}
\usepackage{a4wide}
\usepackage{amsthm}
\usepackage{amsfonts}
\usepackage{amssymb}
\usepackage{amsmath}
\usepackage{cite}
\usepackage{epsfig}
\usepackage{stmaryrd}
\newtheorem{theorem}{Theorem}
\newtheorem{lemma}[theorem]{Lemma}
\newtheorem{proposition}[theorem]{Proposition}

\newcommand\GG{{\cal G}}
\newcommand\PP{{\cal P}}
\newcommand\NN{{\mathbb N}}

\newcommand\dd{\mbox{d}}
\DeclareMathOperator{\supp}{supp}
\DeclareMathOperator{\sgn}{sgn}

\begin{document}
\title{The dimension of the feasible region of pattern densities\thanks{An extended abstract announcing the results presented in this paper has been published in the Proceedings of Eurocomb'23.}}

\author{Frederik Garbe\thanks{Universit\"at Heidelberg, Im Neuenheimer Feld 205, 69120 Heidelberg, Germany. E-mail: {\tt garbe@informatik.uni-heidelberg.de}. Previous affiliation: Faculty of Informatics, Masaryk University, Botanick\'a 68A, 602 00 Brno, Czech Republic. Supported by the MUNI Award in Science and Humanities (MUNI/I/1677/2018) of the Grant Agency of Masaryk University.}\and
Daniel Kr{\'a}l'\thanks{Faculty of Informatics, Masaryk University, Botanick\'a 68A, 602 00 Brno, Czech Republic. E-mail: {\tt dkral@fi.muni.cz}. Supported by the MUNI Award in Science and Humanities (MUNI/I/1677/2018) of the Grant Agency of Masaryk University.}
\and
Alexandru Malekshahian\thanks{Department of Mathematics, King's College London. E-mail: {\tt alexandru.malekshahian@kcl.ac.uk}.}
\and
Raul Penaguiao\thanks{Max Planck Institute for the Sciences, Inselstra\ss e 22, 04103 Leipzig, Germany. E-mail: {\tt raul.penaguiao@mis.mpg.de}.}
}

\date{}

\maketitle

\begin{abstract}
A classical result of Erd\H os, Lov\'asz and Spencer from the late 1970s asserts that
the dimension of the feasible region of densities of graphs with at most $k$ vertices in large graphs
is equal to the number of non-trivial connected graphs with at most $k$ vertices.
Indecomposable permutations play the role of connected graphs in the realm of permutations, and
Glebov et al. showed that pattern densities of indecomposable permutations are independent,
i.e., the dimension of the feasible region of densities of permutation patterns of size at most $k$
is at least the number of non-trivial indecomposable permutations of size at most $k$.
However, this lower bound is not tight already for $k=3$.
We prove that the dimension of the feasible region of densities of permutation patterns of size at most $k$
is equal to the number of non-trivial Lyndon permutations of size at most $k$.
The proof exploits an interplay between algebra and combinatorics inherent to the study of Lyndon words.
\end{abstract}

\section{Introduction}
\label{sec:intro}

The interplay between densities of substructures and
particularly determining the extremal points of the region of feasible densities
is a theme underlying a large body of problems in extremal combinatorics.
A fundamental question related to the region of feasible densities, which traces back to Whitney's work~\cite{Whi32} on the number of subgraphs with a given number of vertices and edges, is determining its number of degrees of freedom, i.e., its dimension.
For example, while not obvious at first sight,
the densities of all four $3$-vertex graphs are determined by any two of them.
In the late 1970s,
Erd\H os, Lov\'asz and Spencer~\cite{ErdLS79}
determined the dimension of the region of feasible subgraph/homomorphic densities of graphs.
They showed that homomorphic densities of non-trivial connected graphs are independent and
the density of any other graph is a function of densities of non-trivial connected graphs.
The aim of this short paper is to determine the dimension of the region of feasible pattern densities of permutations,
where we describe a behavior that unexpectedly differs from the graph case in a substantial way.
In particular,
in addition to being ``connected'' (which is captured as being indecomposable in the context of permutations),
the linear order inherent to permutations plays an essential role in the number of degrees of freedom,
which is manifested by a connection to algebraic and combinatorial properties of Lyndon words that we exploit.

We now state the above mentioned results on graphs and our new results on permutations formally
using the language of the theory of combinatorial limits which we use throughout the paper.
In the theory of combinatorial limits,
large graphs are represented by an analytic object called a graphon and
large permutations by an analytic object called a permuton (we refer
the reader to Subsections~\ref{subsec:limg} and \ref{subsec:limp} for definitions as needed).
Let $\GG_k$ be the set of all graphs with at most $k$ vertices and
$\GG^C_k$ be the set of all connected graphs with at least two and at most $k$ vertices.
Erd\H os, Lov\'asz and Spencer~\cite{ErdLS79} showed the following:
\begin{theorem}[{Erd\H os, Lov\'asz and Spencer~\cite{ErdLS79}}]
\label{thm:graph}
For every $k\ge 2$,
there exist $x_0\in [0,1]^{\GG^C_k}$ and $\varepsilon>0$ such that
for every $x\in B_{\varepsilon}(x_0)\subseteq [0,1]^{\GG^C_k}$,
there exists a graphon $W$ such that \[t(G,W)_{G\in\GG^C_k}=x.\]
Moreover,
there exists a polynomial function $f:[0,1]^{\GG^C_k}\to [0,1]^{\GG_k}$ such that
the following holds for every graphon $W$:
\[f(t(G,W)_{G\in\GG^C_k})=t(G,W)_{G\in\GG_k}.\]
\end{theorem}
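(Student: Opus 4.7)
The statement has two parts, which I would address separately. The polynomial function $f$ is the easier direction: every graph $G\in\GG_k$ decomposes uniquely as $G=C_1\sqcup\cdots\sqcup C_m\sqcup r\cdot K_1$, where $C_1,\ldots,C_m\in\GG^C_k$ are its non-trivial connected components and $r\ge 0$ counts its isolated vertices. Homomorphism densities are multiplicative under disjoint union and $t(K_1,W)=1$ for every graphon $W$, so $t(G,W)=\prod_{i=1}^m t(C_i,W)$. Setting $f(x)_G:=\prod_{i=1}^m x_{C_i}$ produces a polynomial (indeed, monomial) map $[0,1]^{\GG^C_k}\to[0,1]^{\GG_k}$ with the required property.

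For the open-ball claim I would invoke the implicit function theorem applied to a finite-dimensional parametric family of graphons. Fix $n$ sufficiently large and consider step graphons $W_\beta$ on $n$ equal parts, parameterized by a symmetric matrix $\beta\in[0,1]^{n\times n}$; every density $t(G,W_\beta)$ is a polynomial in the entries of $\beta$, so the restricted map $\Phi\colon\beta\mapsto(t(G,W_\beta))_{G\in\GG^C_k}$ is polynomial. It suffices to produce $\beta_0$ in the interior of the parameter cube at which the Jacobian of $\Phi$ has full row rank $|\GG^C_k|$; the ``submersion is locally open'' form of the implicit function theorem then supplies a neighborhood $B_\varepsilon(x_0)$ of $x_0:=\Phi(\beta_0)$ that is entirely realized by graphons $W_{\beta'}$ for $\beta'$ close to $\beta_0$.

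The heart of the argument, and the main obstacle, is exhibiting $\beta_0$ at which the gradients $\nabla_\beta t(G,W_\beta)$ for $G\in\GG^C_k$ are linearly independent. My plan is a perturbative computation near the constant graphon: take $\beta_{ij}=p+\epsilon\gamma_{ij}$ with fixed $p\in(0,1)$ and expand $t(G,W_\beta)$ in powers of $\epsilon$. The coefficient at each order counts weighted embeddings of $G$ (or of its edge-subgraphs) into the ``perturbation graph'' with edge weights $\gamma_{ij}$. Choosing $\gamma$ sufficiently generic, for instance with algebraically independent entries, a hypothetical non-trivial relation $\sum_G\alpha_G\,t(G,W_\beta)\equiv 0$ over $\GG^C_k$ can be shown to force an impossible identity between embedding counts of distinct connected graphs. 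The connectedness hypothesis is essential here: disjoint-union multiplicativity is (morally) the only relation among homomorphism densities, and factoring it out leaves the connected densities genuinely independent.
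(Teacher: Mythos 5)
The paper does not prove Theorem~\ref{thm:graph}; it is quoted from Erd\H os--Lov\'asz--Spencer~\cite{ErdLS79} as motivation and as the template for the permutation result proved later (Theorem~\ref{thm:lower}). So there is no in-paper argument to compare line-by-line; I will assess your sketch on its merits and against the paper's proof of the analogous permutation statement.

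Your treatment of the polynomial map $f$ is correct and is the standard argument: homomorphism densities are multiplicative over disjoint unions, $t(K_1,W)=1$, and every $G\in\GG_k$ factors uniquely into its non-trivial connected components, giving a monomial map. Nothing to add there.

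The second half has a genuine gap. The overall strategy --- realize densities via a finite-dimensional polynomial family of (step) graphons and invoke the inverse/implicit function theorem at a point where the Jacobian has full row rank --- is indeed the right one, and it is exactly the skeleton of the paper's proof of Theorem~\ref{thm:lower}. The problem is your plan for certifying full rank. You propose to rule out a non-trivial \emph{linear} relation $\sum_{G}\alpha_G\,t(G,W_\beta)\equiv 0$. That would show the coordinate functions of $\Phi$ are linearly independent, but linear independence is far weaker than what the submersion step requires. For a polynomial map $\Phi:\mathbb{R}^m\to\mathbb{R}^N$, full Jacobian rank at some point is equivalent to \emph{algebraic} independence of the coordinate functions; linearly independent polynomials can still have an everywhere-degenerate Jacobian (e.g.\ $x$, $y$, $xy$ on $\mathbb{R}^2$). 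So ruling out linear relations among the $t(G,W_\beta)$ does not close the argument. Moreover, the base point you pick is unhelpful: at a constant graphon $\beta=p$, the partial derivatives $\partial_{\beta_{ij}} t(G,W_\beta)$ are, up to the scalar $e(G)p^{e(G)-1}$, essentially the same vector for all $G$, so the Jacobian there is very far from full rank, and the higher-order perturbative expansion you gesture at is where all the work actually lies --- it is left unexecuted, and the phrase ``can be shown to force an impossible identity'' is precisely the statement one needs to prove.

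For contrast, the way the paper handles the analogous difficulty in the proof of Theorem~\ref{thm:lower} is instructive: instead of an over-parameterized family and a genericity appeal, it builds a family with exactly $N=\lvert\PP^L_k\rvert$ ``main'' parameters $s_1,\dots,s_N$ (plus auxiliary $t$-variables), so the relevant Jacobian is square, and then exhibits one explicit monomial of the Jacobian determinant whose coefficient is provably non-zero (via Lemma~\ref{lm:lyndon}), which shows the determinant is a non-zero polynomial. An analogous approach for graphs --- a blow-up of a carefully chosen graph, parameterized so that each connected $G\in\GG^C_k$ contributes an identifiable monomial to the determinant --- would turn your sketch into a proof; what you have now stops at exactly the point where such an explicit computation is needed.
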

\noindent In other words,
the densities of non-trivial connected graphs are independent, and
moreover the homomorphic density of any other graph
is a polynomial function of homomorphic densities of connected graphs.
Therefore,
the dimension of the feasible region of homomorphic densities of graphs with at most $k$ vertices in large graphs
is actually equal to the number of non-trivial connected graphs with at most $k$ vertices.

Indecomposable permutations,
i.e., permutations that cannot be expressed as the direct sum of two permutations,
play the role of connected graphs in the realm of permutations, and
it is plausible to assume that the dimension would be equal to the number of indecomposable permutations.
Let $\PP_k$ be the set of all permutations of size at most $k$, and
let $\PP^I_k$ be the set of all non-trivial indecomposable permutations of size at most $k$.
Glebov et al.~\cite{GleHKKKL17} showed that
for every $k\in\NN$,
there exist $x_0\in [0,1]^{\PP^I_k}$ and $\varepsilon>0$ such that
for every $x\in B_{\varepsilon}(x_0)\subseteq [0,1]^{\PP^I_k}$
there exists a permuton $\Pi$ such that $d(\sigma,\Pi)_{\sigma\in\PP^I_k}=x$,
i.e., the densities of indecomposable permutations
are independent in the analogy to the graph case covered by Theorem~\ref{thm:graph}.
However, the bound is not tight already for $3$-point patterns: the dimension is five
although there are only four non-trivial indecomposable permutations of size at most three.

Borga and the last author~\cite{BorP20,BorP23}
studied the dimension of the feasible region of densities of patterns of size at most $k$ and
observed, utilizing a result of Vargas~\cite{Var14},
a link between the dimension and Lyndon permutations.
We say that a permutation is Lyndon if the word formed by its indecomposable blocks
is Lyndon (a formal definition is given in Subsection~\ref{subsec:lyndon}), and
use $\PP^L_k$ for the set of all non-trivial Lyndon permutations of size at most $k$.
In particular, they noted that the dimension of the region of feasible pattern densities
is at most the number of non-trivial Lyndon permutations~\cite{BorP20}, and
conjectured~\cite[Conjecture 1.3]{BorP20} that this bound is tight.
We combine methods from algebra and combinatorics to prove this conjecture; note that
the set $\PP^L_k$ of non-trivial Lyndon permutations
is a superset of $\PP^I_k$ of non-trivial indecomposable permutations.

\begin{theorem}
\label{thm:lower}
For every integer $k\ge 2$,
there exists $x_0\in [0,1]^{\PP^L_k}$ and $\varepsilon>0$ such that
for every $x\in B_\varepsilon(x_0)\subseteq [0,1]^{\PP^L_k}$
there exists a permuton $\Pi$ such that
\[d(\sigma,\Pi)_{\sigma\in\PP^L_k}=x.\]
\end{theorem}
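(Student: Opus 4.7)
The plan is to combine the algebraic structure of the shuffle algebra on permutations with an inverse-function-theorem argument in the spirit of Glebov et al. The algebraic starting point is Vargas's theorem: the vector space spanned by permutations, equipped with the shuffle product induced by direct sums of indecomposable blocks, is a polynomial algebra freely generated by the Lyndon permutations. Since permuton pattern densities satisfy the shuffle relations up to a straightforward normalization, the density of any non-Lyndon permutation is a polynomial function of densities of Lyndon permutations; this is precisely the upper bound observed by Borga and the last author. For Theorem~\ref{thm:lower}, the remaining task is to exhibit a permuton at which the $|\PP^L_k|$ Lyndon density coordinates are locally independent.

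I would look for a parametric family $\Pi_\mathbf{x}$, $\mathbf{x}\in\RR^{\PP^L_k}$, for instance a convex combination
\[
\Pi_\mathbf{x} = \Bigl(1-\sum_\tau x_\tau\Bigr)\Pi^{\ast} + \sum_{\tau\in\PP^L_k} x_\tau\,\Pi^\tau ,
\]
where $\Pi^{\ast}$ is a carefully chosen base permuton (a natural candidate being a direct sum realizing every non-trivial indecomposable pattern of size at most $k$ with positive density) and each $\Pi^\tau$ is an explicit permuton tailored to the Lyndon pattern $\tau$. I would then show that the Jacobian of $\mathbf{x}\mapsto (d(\tau,\Pi_\mathbf{x}))_{\tau\in\PP^L_k}$ has full rank at some interior point $\mathbf{x}_0$. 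To keep the Jacobian tractable, I would order $\PP^L_k$ by the length of the Lyndon factorization, breaking ties by pattern size and by the lexicographic order on the word of indecomposable blocks, and aim to verify that the matrix is triangular with non-zero diagonal. The base case of the induction, namely the indecomposable Lyndon permutations, is delivered directly by Glebov et al.'s theorem.

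The main obstacle is to realise this triangularity combinatorially. In the shuffle expansion of the Lyndon factorization $\tau=\tau_1\cdots\tau_r$, the word $\tau$ itself appears with a strictly positive multinomial coefficient, while all other summands correspond to words that are either shorter or strictly smaller in the ordering above; one must arrange the permuton family so that the derivative along $x_\tau$ reads off exactly this coefficient (times an explicit, already-controlled density) plus contributions lying in the span of the lower-order Lyndon densities. This suggests constructing each $\Pi^\tau$ as a direct sum mirroring the Lyndon factorization of $\tau$, and then using the multilinearity of permuton densities with respect to convex combinations, combined with the inductive hypothesis for lower-complexity Lyndon permutations, to compute the Jacobian entry by entry. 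Once triangularity and the non-vanishing of the diagonal have been established at $\mathbf{x}_0$, the inverse function theorem yields the open neighborhood of Lyndon density vectors realised by permutons, which is the content of the statement.
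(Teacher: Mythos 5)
Your high-level strategy --- exhibit a finite-parameter family of permutons, show the Jacobian of the Lyndon-density map has full rank at an interior point, and invoke the Inverse Function Theorem --- is exactly the skeleton the paper uses, and it is the right one. But the proposal never actually carries out the step on which everything hinges, and the specific construction you suggest has problems that would block a straightforward completion.

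First, the construction. You propose a convex combination $\Pi_\mathbf{x} = (1-\sum_\tau x_\tau)\Pi^* + \sum_\tau x_\tau \Pi^\tau$ and appeal to ``multilinearity of permuton densities with respect to convex combinations.'' That property does not hold: if $\Pi = \sum_i \lambda_i \Pi_i$, then sampling $|\sigma|$ points from $\Pi$ amounts to independently assigning each point to one of the $\Pi_i$ with probability $\lambda_i$, so $d(\sigma,\Pi)$ is a \emph{homogeneous polynomial of degree $|\sigma|$} in the $\lambda_i$, not a linear (or multilinear) function. Computing the Jacobian of such a mixture is therefore not the simple linear bookkeeping your argument assumes, and the claimed triangular structure does not come for free. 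Relatedly, your proposed ordering ``by the length of the Lyndon factorization'' is degenerate on $\PP^L_k$: every Lyndon permutation has a trivial (length-one) Lyndon factorization. What you presumably want is the number of indecomposable blocks, which is a different notion and is precisely the combinatorial statistic the paper works with.

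Second, and more importantly, you explicitly flag the central difficulty (``the main obstacle is to realise this triangularity combinatorially'') and then only ``suggest'' a route without resolving it. That obstacle is where the whole proof lives. The paper handles it by choosing a very specific family: the blow-up of $\pi_1\oplus\cdots\oplus\pi_{N+1}$ (the direct sum of \emph{all} Lyndon permutations of size at most $k$, listed in decreasing $<_L$ order), with a two-level parametrization in which the $i$-th Lyndon block is scaled by $s_i$ and its $j$-th entry by $s_i t_{i,j}$. Differentiating only with respect to $s_1,\ldots,s_N$ and isolating the monomials of the Jacobian determinant that contain \emph{every} $t$-variable exactly once, one shows via a structural uniqueness lemma (Lemma~\ref{lm:lyndon}: when $\pi_1>_L\cdots>_L\pi_n$ are Lyndon and disjoint index sets $J_1,\ldots,J_n$ in $\pi_1\oplus\cdots\oplus\pi_n$ induce the patterns $\pi_1,\ldots,\pi_n$, then each $J_i$ must be the $i$-th consecutive interval) that exactly one such monomial survives, coming from the identity permutation in the determinant expansion, and that its coefficient is a nonzero product of multinomial-type factors. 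This is what forces the Jacobian determinant to be a nonzero polynomial. Your sketch contains no analogue of Lemma~\ref{lm:lyndon} and no mechanism to control the cross-terms, so as written it does not prove the theorem. Also note that appealing to Glebov et al.\ for the ``base case'' of indecomposable Lyndon permutations does not help: their result gives independence of indecomposable densities at \emph{their} base point and with \emph{their} parametrization; it does not supply the diagonal entries of a Jacobian with respect to your mixture coordinates.
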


Theorem~\ref{thm:lower} determines
the dimension of the feasible region of densities of patterns of size at most $k$ in large permutations.
Since the statement that
there exists a polynomial function $f:[0,1]^{\PP^L_k}\to [0,1]^{\PP_k}$ such that,
for every permuton $\Pi$,
we have
$f(d(\sigma,\Pi)_{\sigma\in\PP^L_k})=d(\sigma,W)_{\sigma\in\PP_k}$,
is given in~\cite{BorP20} without particular details,
we give a direct proof of the existence of the function $f$ in Section~\ref{sec:upper} (Theorem~\ref{thm:upper}) for completeness.
The presented proof (unlike the one suggested in~\cite{BorP20}) uses only basic properties of Lyndon words and flag algebras,
which are surveyed in Subsections~\ref{subsec:lyndon} and~\ref{subsec:flag}.
We remark that
properties of Lyndon words seem to capture independence of ``order-like'' combinatorial structures and
the ideas presented in this paper were followed in~\cite{KraLPS23}
to determine the dimension of the feasible region of densities of subtournaments in large tournaments.

\section{Preliminaries}
\label{sec:prelim}

We now introduce notation used throughout this manuscript.
As we combine techniques from several different areas,
we introduce the relevant techniques from each of the areas in one of the subsections that follow.
Before doing so, we first fix some general notation.
The set of the first $n$ positive integers is denoted by $[n]$, and
if $A$ is a set of integers and $k\in\NN$, then $A+k$ is the set $\{a+k,a\in A\}$.
A subset $A$ of integers is an \emph{interval}
if $A$ consists of consecutive integers,
i.e., there exist integers $a,b\in\NN$, $a\le b$, such that $A=\{a,a+1,\ldots,b\}$.
Finally, two subsets $I$ and $J$ of integers
are \emph{non-crossing} if $\max(I)<\min(J)$ or vice versa.

\subsection{Graph limits}
\label{subsec:limg}

We now introduce basic notations related to graph limits as developed
in particular in~\cite{BorCLSSV06,BorCLSV08,BorCLSV12,LovS06,LovS10};
we refer to the monograph by Lov\'asz~\cite{Lov12} for a comprehensive introduction to graph limits.
Given two graphs $H$ and $G$, we say a map $\phi:V(H)\rightarrow V(G)$ is a \emph{homomorphism}
if it preserves edges, i.e. if $\phi(x)\phi(y)\in E(G)$ whenever $xy\in E(H)$.
The \emph{homomorphic density} of $H$ in $G$, denoted by $t(H,G)$,
is the probability that a uniform random function $f:V(H)\to V(G)$,
is a homomorphism of $H$ to $G$.
A sequence $(G_n)_{n\in\NN}$ of graphs is \emph{convergent}
if the number of vertices of $G_n$ tends to infinity and, for every fixed $H$,
the sequence of densities $t(H,G_n)$ converges as $n$ tends to infinity.

Convergent sequences of graphs are represented by an analytic object called a \emph{graphon}:
this is a symmetric measurable function $W:[0,1]^2\to [0,1]$,
i.e., $W(x,y)=W(y,x)$ for $(x,y)\in [0,1]^2$.
The \emph{homomorphic density} of a graph $H$ in a graphon $W$ is defined by
\[t(H,W)=\int_{[0,1]^{V(H)}}\prod_{uv\in E(H)}W(x_u,x_v)\dd x_{V(H)}.\]
We often just briefly say the \emph{density} of a graph $H$ in $W$ rather than
the homomorphic density of $H$ in $W$.
A graphon $W$ is a \emph{limit} of a convergent sequence $(G_n)_{n\in\NN}$ of graphs
if, for every graph $H$, $t(H,W)$ is the limit of $(t(H,G_n))_{n\in\mathbb N}$.
Every convergent sequence of graphs has a limit graphon and
every graphon is a limit of a convergent sequence of graphs as shown by Lov\'asz and Szegedy~\cite{LovS06};
also see~\cite{DiaJ08} for a relation to exchangeable arrays.

\subsection{Permutations}

A \emph{permutation} of \emph{size} $n$ is a bijective function $\pi$ from $[n]$ to $[n]$;
the size of $\pi$ is denoted by $\lvert\pi\rvert$.
The permutation $\pi$ is represented as a word $\pi(1)\pi(2)\dots\pi(n)$,
e.g., $123$ stands for the identity permutation of size $3$.
We say that a permutation is \emph{non-trivial} if its size is at least two.
The set of all permutations of size at most $k$ is denoted by $\PP_k$,
e.g., $\PP_3=\{1,12,21,123,132,213,231,312,321\}$.

The \emph{direct sum} of two permutations $\pi_1$ and $\pi_2$ is the permutation $\pi$ of size $\lvert\pi_1\rvert+\lvert\pi_2\rvert$ such that
$\pi(k)=\pi_1(k)$ for $k\in[\lvert\pi_1\rvert]$ and $\pi(\lvert\pi_1\rvert+k)=\lvert\pi_1\rvert+\pi_2(k)$ for $k\in[\lvert\pi_2\rvert]$;
the permutation $\pi$ is denoted by $\pi_1\oplus\pi_2$.
A permutation is \emph{indecomposable} if it is not a direct sum of two permutations.
Every permutation $\pi$ is a direct sum of indecomposable permutations (possibly with repetitions),
which are referred to as the \emph{indecomposable blocks} of the permutation $\pi$.
For example, the permutation $321645987=321\oplus 312\oplus 321$ consists of three indecomposable blocks.
An \emph{increasing segment} of a permutation $\pi$
is an inclusion-wise maximal interval $A\subseteq [\lvert\pi\rvert]$ such that
$\pi(a+1)=\pi(a)+1$ for every $a\in A$ such that $a+1\in A$.
For example, the permutation $312456$ consists of three increasing segments.

The \emph{pattern} induced by elements $1\le k_1<\cdots<k_m\le n$ in a permutation $\pi$
is the unique permutation $\sigma:[m]\to [m]$ such that $\sigma(i)<\sigma(i')$
if and only if $\pi(k_i)<\pi(k_{i'})$ for all $i,i'\in [m]$.
The \emph{density} of a permutation $\sigma$ in a permutation $\pi$, denoted by $d(\sigma,\pi)$,
is the probability that the pattern induced by $\lvert\sigma\rvert$ uniformly randomly chosen elements of $[\lvert\pi\rvert]$ in $\pi$
is the permutation $\sigma$.
Similarly to the graph case,
we say that a sequence $(\pi_n)_{n\in\NN}$ of permutations is \emph{convergent}
if the size of $\pi_n$ tends to infinity and, for every fixed $\sigma$, 
the sequence of densities $d(\sigma,\pi_n)$ converges as $n$ tends to infinity.

\subsection{Permutation limits}
\label{subsec:limp}

We now introduce analytic representations of convergent sequences of permutations
as originated in~\cite{HopKMRS13,HopKMS11,KraP13} and
further developed in~\cite{BalHLPUV15,ChaKNPSV20,GleGKK15,KenKRW20,Kur22,GarHKP22}.
A \emph{permuton} is a probability measure $\Pi$ on the $\sigma$-algebra of Borel subsets from $[0,1]^2$ that
has uniform marginals, i.e.,
\[\Pi([a,b]\times[0,1]) = \Pi([0,1]\times[a,b]) = b - a\]
for all $0\le a\le b\le 1$ (equivalently, its projection on each of the two dimensions is the uniform measure).
A \emph{$\Pi$-random permutation} of size $n$ is the permutation $\sigma$ obtained
by sampling $n$ points according to the measure $\Pi$ (note that the $x$-coordinates and $y$-coordinates of the sampled points
are pairwise distinct with probability~$1$),
sorting them according to their $x$-coordinates,
say $(x_1,y_1),\ldots,(x_n,y_n)$ for $x_1<\cdots<x_n$, and
defining $\sigma$ so that $\sigma(i)<\sigma(j)$ if and only if $y_i<y_j$ for $i,j\in [n]$.
The \emph{density} of a permutation $\sigma$ in a permuton $\Pi$,
which is denoted by $d(\sigma,\Pi)$,
is the probability that the $\Pi$-random permutation of size $|\sigma|$ is $\sigma$.
Finally, if $S$ is a formal linear combination of permutations,
then $d(S,\Pi)$ is the linear combination of the densities of its constituents in $\Pi$, with coefficients given by the combination.
For example, if $S=\frac{1}{2}\,12+\frac{1}{3}\,123$, then $d(S,\Pi)=\frac{1}{2}\,d(12,\Pi)+\frac{1}{3}\,d(123,\Pi)$.

A permuton $\Pi$ is a \emph{limit} of a convergent sequence $(\pi_n)_{n\in\NN}$ of permutations
if, for every permutation $\sigma$, $d(\sigma,\Pi)$ is the limit of $d(\sigma,\pi_n)$.
Every permuton is a limit of a convergent sequence of permutations and
every convergent sequence of permutations has a (unique) limit permuton~\cite{HopKMRS13,HopKMS11}.

\begin{figure}
\begin{center}
\epsfbox{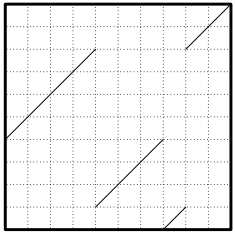}
\end{center}
\caption{The support of the blow-up of the permutation $42315$ scaled by $0.4$, $0.2$, $0.1$, $0.1$ and $0.2$.}
\label{fig:blowup}
\end{figure}

We next define a way of creating a permuton from a given permutation $\pi$.
Let $\pi$ be a permutation of size $k$ and $z_1,\ldots,z_k$ non-negative reals summing to one.
The \emph{blow-up} of the permutation $\pi$ with parts \emph{scaled} by the factors $z_1,\ldots,z_k$
is the unique permuton $\Pi$ defined as follows (an example is given in Figure~\ref{fig:blowup}).
Let $s_i$, $i\in [k]$, be the sum of $z_1+\cdots+z_{i-1}$ and
let $t_i$, $i\in [k]$, be the sum of $z_{\pi^{-1}(1)}+\cdots+z_{\pi^{-1}(i-1)}$.
The support of the permuton $\Pi$ is the set
\[\bigcup_{i\in [k]}\{(s_i+x,t_{\pi(i)}+x),0\le x\le z_i\}.\]
Informally speaking, the support of the permuton $\Pi$ is formed by $k$ increasing segments that
are arranged in the order given by the permutation $\pi$ and scaled by the factors $z_1,\ldots,z_k$.
Since the set
\[\{x\subseteq [0,1] \  |\  \exists \ y\neq y' \text{ with} \ (x, y), \ (x, y')\in \supp(\Pi)\}\]
has measure zero,
the support uniquely determines a probability measure on $[0,1]^2$ with uniform marginals;
this probability measure is the permuton $\Pi$.
In particular, $\Pi(X)$ for a Borel subset $X\subseteq [0,1]^2$
is equal to the measure of the projection of the set $X\cap\supp(\Pi)$ on either of the two coordinates (the measure
of either of the projections is the same).

\subsection{Lyndon words and Lyndon permutations}
\label{subsec:lyndon}

Lyndon words, introduced by \v{S}ir\v{s}ov~\cite{Shi53} and by Lyndon~\cite{Lyn54} in the 1950s,
form a notion that has nowadays many applications in algebra, combinatorics and computer science.
Let $\Sigma$ be a linearly ordered alphabet.
The lexicographic order on words over $\Sigma$ is denoted by $\preceq$.
A word $s_1\dots s_n$ over the alphabet $\Sigma$ is a \emph{Lyndon word}
if no proper suffix of the word $s_1\cdots s_n$
is smaller (in the lexicographic order) than the word $s_1\dots s_n$ itself.
For example, the word $aab$ is Lyndon but the word $aba$ is not (with respect to the usual order on letters).
The following well-known property of Lyndon words~\cite{CheFL58,Rad79,Pen22} is important for our arguments.
\begin{proposition}
\label{prop:lyndon}
Every word over a linearly ordered alphabet
can be uniquely expressed as a concatenation of Lyndon words $w_1,\ldots,w_{\ell}$ such that
$w_1\succeq\cdots\succeq w_{\ell}$.
\end{proposition}
For example, the word $aba$ is the concatenation of the Lyndon words $ab$ and $a$, while
the word $ababa$ is the concatenation of the Lyndon words $ab$, $ab$ and $a$.

A \emph{subword} of a word $s_1\dots s_n$ is any word $s_{i_1}\dots s_{i_m}$ with $1\le i_1<\cdots<i_m\le n$.
The \emph{shuffle product} of words $s_1\dots s_n$ and $t_1\dots t_m$,
which is denoted by $s_1\dots s_n\otimes_S t_1\dots t_m$,
is the formal sum of all $\binom{n+m}{n}$ (not necessarily distinct) words of length $n+m$ that
contain the words $s_1\dots s_n$ and $t_1\dots t_m$ as subwords formed by disjoint sets of letters.
For example, $ab\otimes_S ac=2\,aabc+2\,aacb+abac+acab$.
The following statement can be found in~\cite[Theorem 3.1.1(a)]{Rad79}.

\begin{lemma}
\label{lm:shuffle}
Let $w_1,\ldots,w_n$ be Lyndon words such that $w_1\succeq\cdots\succeq w_n$.
The lexicographically largest constituent in the shuffle product $w_1\otimes_S\cdots\otimes_S w_n$ is the term $w_1\dots w_n$, and
if the words $w_1,\ldots,w_n$ are pairwise distinct, then the coefficient of this term is equal to one.
\end{lemma}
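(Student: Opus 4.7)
My plan is to prove the lemma via a greedy construction of the lex-largest shuffle term, using the Lyndon defining property and the ordering hypothesis $w_1\succeq\cdots\succeq w_n$ as the key inputs. Write $\alpha_0$ for the canonical interleaving that places all letters of $w_1$ first, then those of $w_2$, and so on, producing $w^{\alpha_0}=w_1w_2\cdots w_n$.

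For the lex-maximality claim, I would consider the greedy procedure that builds a shuffle term one letter at a time by always picking the largest available next letter (the available letters being the first unused letter of each $w_\ell$, with ties broken by preferring the smallest index). The ordering hypothesis gives $(w_\ell)_1\le (w_1)_1$ for all $\ell$, so the greedy picks $(w_1)_1$ first from $w_1$. The Lyndon property $(w_1)_1\le (w_1)_k$ for all $k$ then yields $(w_1)_2\ge (w_1)_1\ge (w_\ell)_1$ for every $\ell\ge 2$, so the greedy continues with $(w_1)_2$; iterating, the greedy consumes all of $w_1$ before moving on. Applying the same reasoning inductively to the sub-shuffle on $w_2,\ldots,w_n$ produces $w_2w_3\cdots w_n$, so the overall greedy output is exactly $w_1w_2\cdots w_n$.

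The main obstacle is showing that this greedy output is indeed the lex maximum among all shuffle terms. Fix any interleaving $\alpha$ and let $P$ be the first position where $w^\alpha$ differs from $w_1\cdots w_n$. Writing $(w_i)_j$ for the canonical letter at position $P+1$ and $(w_{\ell^*})_{u_{\ell^*}+1}$ for $\alpha$'s letter at $P+1$ (where $u_\ell$ counts letters of $w_\ell$ consumed by $\alpha$ in positions $1,\ldots,P$), the bound $(w_{\ell^*})_{u_{\ell^*}+1}\le (w_i)_j$ is immediate when $u_{\ell^*}=0$ (using the ordering and the Lyndon property as in the greedy argument above). The subtle case is when $w_{\ell^*}$ has been partially consumed by $\alpha$; here I would exploit the fact that $\alpha$'s letter sequence matches the canonical one on positions $1,\ldots,P$, so the prefix $(w_{\ell^*})_1\cdots (w_{\ell^*})_{u_{\ell^*}}$ embeds as a subsequence of $w_1\cdots w_{i-1}(w_i)_1\cdots(w_i)_{j-1}$, and combine this prefix-matching with the Lyndon property of $w_{\ell^*}$ to push through the bound $(w_{\ell^*})_{u_{\ell^*}+1}\le (w_i)_j$. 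Since the letters at $P+1$ differ by choice of $P$, the inequality is strict, giving $w^\alpha\prec w_1\cdots w_n$.

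For the coefficient claim, the above analysis shows that under distinctness of the $w_\ell$ every alternative interleaving $\alpha\neq\alpha_0$ eventually produces a strictly smaller word. Indeed, if $\alpha$ matches the greedy up to a tie at some step and then diverges, the two subsequent greedy-like continuations consume two distinct Lyndon words $w_{\ell_1}\succ w_{\ell_2}$ in parallel (the Lyndon property forces each partially consumed word to continue offering the largest available letter); at the first position $k$ where these distinct Lyndon words differ as letter strings, $(w_{\ell_1})_k>(w_{\ell_2})_k$ produces $w^\alpha\prec w_1\cdots w_n$. Hence $\alpha_0$ is the unique interleaving producing $w_1\cdots w_n$, and the coefficient of this term in the shuffle product is exactly one.
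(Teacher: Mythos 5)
The paper does not include a proof of Lemma~\ref{lm:shuffle}; it is cited as Theorem~3.1.1(a) from Radford~\cite{Rad79}, so there is no in-paper argument to compare against. Judged on its own terms, your proposal does not close the lemma. The greedy paragraph only establishes that $w_1\cdots w_n$ occurs as a shuffle term, which is immediate without any Lyndon hypothesis; the substance of the lemma is the maximality and uniqueness claims, and those are precisely the steps you flag as ``the main obstacle'' and ``the subtle case'' and then leave open.

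Concretely, for the maximality claim you correctly reduce to comparing $\alpha$'s letter $(w_{\ell^*})_{u_{\ell^*}+1}$ with the canonical letter $(w_i)_j$ at the first divergence, but the bound $(w_{\ell^*})_{u_{\ell^*}+1}\le(w_i)_j$ in the case $u_{\ell^*}>0$ is asserted, not proved. The Lyndon property of $w_{\ell^*}$ says that proper suffixes of $w_{\ell^*}$ are lexicographically \emph{larger} than $w_{\ell^*}$, so it yields $(w_{\ell^*})_{u_{\ell^*}+1}\ge(w_{\ell^*})_1$ --- a lower bound, pointing in the wrong direction; the upper bound you need must instead come from the ordering hypothesis together with a nontrivial analysis of how the consumed prefix of $w_{\ell^*}$ embeds into $w_1\cdots w_{i-1}(w_i)_1\cdots(w_i)_{j-1}$, and that analysis is absent. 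Even the case $u_{\ell^*}=0$ that you call ``immediate'' only follows from your greedy reasoning when $i=1$ (or more generally $\ell^*\ge i$, giving $(w_{\ell^*})_1\le(w_i)_1\le(w_i)_j$); the possibility $\ell^*<i$ is not addressed. The coefficient paragraph has the same character: appealing to ``greedy-like continuations'' consuming two distinct Lyndon words ``in parallel'' gestures at an argument without supplying one, and making it rigorous would again run into the same unproved comparison. As written, the framework is plausible but the actual content of the lemma remains to be proved.
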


In this manuscript,
we always work with the alphabet $\Sigma$ of indecomposable permutations,
i.e., the letters of the alphabet are indecomposable permutations and the linear order is defined in a way that
indecomposable permutations of smaller size precede those of larger size and
indecomposable permutations of the same size are ordered lexicographically.
Hence, the first five letters of $\Sigma$ are the following five (indecomposable) permutations:
$1$, $21$, $231$, $312$ and $321$ (in this order).
Given a permutation $\pi$,
we write $\overline{\pi}$ for the word over the alphabet $\Sigma$ consisting of the indecomposable blocks of $\pi$.
We write $\pi<_L\pi'$ if the word $\overline{\pi}$ is lexicographically smaller than the word $\overline{\pi'}$, and
use the symbols $\le_L$, $>_L$ and $\ge_L$ in regard to this order in the usual sense.
In particular, $1<_L 12<_L 132<_L 21<_L 231$.
A permutation $\pi$ is a \emph{Lyndon permutation} if the word $\overline{\pi}$ is a Lyndon word.
For example, the permutation $21\oplus 231=21453$ is a Lyndon permutation
but the permutations $12=1\oplus 1$, $213=21\oplus 1$ and $2143=21\oplus 21$ are not.
Note that all indecomposable permutations are Lyndon.
The set of all non-trivial Lyndon permutations of size at most $k$ is denoted by $\PP^L_k$,
e.g., $\PP^L_2=\{21\}$ and $\PP^L_3=\{21,132,231,312,321\}$.

The following is a folklore result; we include its short proof for completeness.

\begin{proposition}
The number $\lvert\PP_k^L\rvert$ of non-trivial Lyndon permutations of length at most $k$
is independent of the choice of ordering of the alphabet $\Sigma$ of indecomposable permutations.
\end{proposition}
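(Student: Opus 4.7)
The plan is to reduce the claim to the classical fact that, for any finite ordered alphabet, the number of Lyndon words with a prescribed multiset of letters does not depend on the chosen linear order.

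First, I would observe that the map $\pi\mapsto\overline{\pi}$ is a bijection between non-trivial permutations and non-empty words over $\Sigma$: every permutation has a unique decomposition into indecomposable blocks, and conversely every non-empty word over $\Sigma$ yields a non-trivial permutation via the direct sum of its letters. Under this bijection, $|\pi|$ equals the sum of the sizes of the letters of $\overline{\pi}$, so the condition $|\pi|\le k$ depends only on the multiset of letters of $\overline{\pi}$ and not on the order in which they appear.

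I would then partition $\PP^L_k$ according to the multiset of indecomposable blocks. For each multiset $M$ of letters of $\Sigma$ with $\sum_{w\in M}|w|\le k$, the number of Lyndon permutations in $\PP^L_k$ whose multiset of blocks equals $M$ is exactly the number of Lyndon words over $\Sigma$ with letter-multiset $M$, so it suffices to show that this latter count depends only on $M$ and not on the linear order of $\Sigma$.

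This independence is provided by the classical bijection between Lyndon words and primitive (aperiodic) necklaces: within the cyclic-rotation orbit of a primitive word, the lexicographically smallest element is the unique Lyndon representative, and this yields a bijection between Lyndon words with letter-multiset $M$ and primitive necklaces with bead-multiset $M$. The number of primitive necklaces with a prescribed bead-multiset is visibly a purely combinatorial quantity (given explicitly by the multivariate Witt formula) that makes no reference to any ordering, so summing over all admissible multisets $M$ yields the proposition. The only non-routine point is the Lyndon--necklace bijection itself, which is a standard fact that can be invoked from the references already appearing in Subsection~\ref{subsec:lyndon}; I do not expect any genuine obstacle beyond stating it precisely.
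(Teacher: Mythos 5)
Your argument is correct but takes a genuinely different route from the paper. The paper appeals to Proposition~\ref{prop:lyndon} (the Chen--Fox--Lyndon unique factorization into a non-increasing sequence of Lyndon words): since every permutation of size $n$ corresponds bijectively to a word over $\Sigma$, the factorization yields the power-series identity $\prod_{n\ge 1}(1-x^n)^{-\ell_n}=\sum_{n\ge 0}n!\,x^n$, where $\ell_n$ is the number of Lyndon permutations of size exactly $n$; because the right-hand side is fixed, the $\ell_n$ are determined uniquely, independently of the chosen ordering of $\Sigma$. You instead invoke the other classical characterization of Lyndon words -- as distinguished representatives of primitive necklaces -- and observe that the number of such necklaces with a prescribed bead-multiset is a pure orbit count that never mentions any ordering. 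Both arguments are valid. Yours is more bijective and in fact establishes a strictly finer invariant (the count of Lyndon permutations with a \emph{fixed multiset} of indecomposable blocks, not merely a fixed total size), at the cost of importing a second standard fact about Lyndon words. The paper's argument is shorter in context because it reuses Proposition~\ref{prop:lyndon}, which is already stated and needed elsewhere; it buys ordering-independence from the uniqueness of a power-series factorization rather than from a direct bijection. Your only soft spot is that you gesture at the Witt formula without needing it -- the relevant point is simply that the number of cyclic orbits on sequences with a given content is order-free -- but you already note this, so there is no real gap.
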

\begin{proof}
Fix any ordering of $\Sigma$ and
let $\ell_k$ be the number of Lyndon permutations of size exactly $k$ that arise from this ordering.
Proposition~\ref{prop:lyndon} implies the following power series identity:
\[ \prod_{n\geq 1} (1-x^n)^{-\ell_n} = \sum_{n\geq 1} n! x^n.\]
This uniquely determines each $\ell_n$, and hence $|\PP_k^L|$.
\end{proof}

\subsection{Flag algebra products}
\label{subsec:flag}

The flag algebra method of Razborov~\cite{Raz07} catalyzed progress on many important problems in extremal combinatorics and
has been successfully applied to problems concerning
graphs~\cite{Raz08,Grz12,HatHKNR12,KraLSWY13,HatHKNR13,PikV13,BabT14,BalHLL14,PikR17,GrzHV19,PikST19},
digraphs~\cite{HlaKN17,CorR17}, hypergraphs~\cite{Raz10,BabT11,GleKV16,BalCL22},
geometric settings~\cite{KraMS12}, permutations~\cite{BalHLPUV15,SliS18,CruDN23}, and
other combinatorial objects.
We will not introduce the method completely but we present the concept of a product,
which will be important in our further considerations.
To avoid confusion with other types of products considered in this manuscript,
we refer to the product as a flag product.
If $\pi_1$ and $\pi_2$ are two permutations of sizes $k_1$ and $k_2$, respectively,
then the \emph{flag product} of $\pi_1$ and $\pi_2$, which is denoted by $\pi_1\times\pi_2$,
is a formal linear combination of all permutations $\sigma$ of size $k_1+k_2$ such that
there exists a $k_1$-element set $S\subseteq [k_1+k_2]$ such that
the pattern induced by $S$ in $\sigma$ is $\pi_1$ and
the pattern induced by $[k_1+k_2]\setminus S$ in $\sigma$ is $\pi_2$;
the coefficient at $\sigma$ is equal to the number of choices of such $S$ divided by $\binom{k_1+k_2}{k_1}$.
We give an example:
\begin{equation}
12\times 1=\frac{3}{3}\,123+\frac{2}{3}\,132+\frac{1}{3}\, 231+\frac{2}{3}\, 213+\frac{1}{3}\, 312.
\label{eq:ex1}
\end{equation}
The following identity follows from \cite{Raz07} and holds for any permuton $\Pi$ and any permutations $\pi_1,\ldots,\pi_n$:
\begin{equation}
d\left(\pi_1\times\cdots\times\pi_n,\Pi\right)=d(\pi_1,\Pi)d(\pi_2,\Pi)\cdots d(\pi_n,\Pi).
\label{eq:product}
\end{equation}
For example, \eqref{eq:ex1} and \eqref{eq:product} yield that
the following identity holds for any permuton $\Pi$:
\[
d(1,\Pi)d(12,\Pi)=\frac{3}{3}d(123,\Pi)+\frac{2}{3}d(132,\Pi)+\frac{1}{3}d(231,\Pi)+\frac{2}{3}d(213,\Pi)+\frac{1}{3}d(312,\Pi).
\]

\section{Lower bound}
\label{sec:lower}

In this section, we prove our lower bound on the dimension of the feasible region of densities of $k$-patterns;
the matching upper bound is proven in the next section.
We start with the following lemma, which is a key part of the proof of Theorem~\ref{thm:lower}.
The main idea of the proof of the lemma is similar to that used in the proof of Lemma~\ref{lm:flag}.

\begin{lemma}
\label{lm:lyndon}
Let $\pi_1,\ldots,\pi_n$ be Lyndon permutations such that $\pi_1>_L\pi_2>_L\cdots>_L\pi_n$, and
let $N$ be the size of the permutation $\pi_1\oplus\cdots\oplus\pi_n$.
If $J_1,\ldots,J_n$ are disjoint subsets of $[N]$ such that
the pattern induced by $J_i$ in $\pi_1\oplus\cdots\oplus\pi_n$  is $\pi_i$ for every $i\in [n]$,
then every $J_i$ is an interval and $J_i=[\lvert\pi_i\rvert]+\lvert\pi_1\rvert+\cdots+\lvert\pi_{i-1}\rvert$ for every $i\in [n]$.
\end{lemma}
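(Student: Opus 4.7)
The plan is to mimic the strategy used in the proof of Lemma~\ref{lm:flag}, but to pair it with the uniqueness clause of Lemma~\ref{lm:shuffle} in order to pin down the $J_i$'s exactly rather than just up to lexicographic comparison. Set $\pi:=\pi_1\oplus\cdots\oplus\pi_n$. Since $|J_i|=|\pi_i|$ for every $i$ and $|\pi_1|+\cdots+|\pi_n|=N$, the disjoint sets $J_1,\ldots,J_n$ actually form a partition of $[N]$.

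For each $i\in[n]$, let $m_i$ be the number of indecomposable blocks of $\pi_i$ and write $\pi_i=(\pi_i)_1\oplus\cdots\oplus(\pi_i)_{m_i}$. Partition $J_i$ in the non-crossing way into $J_{i,1},\ldots,J_{i,m_i}$ so that $J_{i,j}$ induces the indecomposable permutation $(\pi_i)_j$ in $\pi$. The first key observation, identical to the one used in Lemma~\ref{lm:flag}, is that $J_{i,j}$ must be contained in a single indecomposable block of $\pi$: if $J_{i,j}$ met two distinct blocks of $\pi$, the induced pattern would split as a direct sum, contradicting the indecomposability of $(\pi_i)_j$. Since the $J_{i,j}$'s partition $[N]$, and the indecomposable blocks of $\pi$ also partition $[N]$, and the total count $m_1+\cdots+m_n$ equals the number of indecomposable blocks of $\pi$, containment is forced to be equality: each $J_{i,j}$ coincides with an indecomposable block of $\pi$.

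This yields a bijection $\phi$ from the pairs $(i,j)$ to the positions of letters in the word $\overline{\pi}=\overline{\pi_1}\,\overline{\pi_2}\cdots\overline{\pi_n}$ (concatenation), under which the $\phi(i,j)$-th letter of $\overline{\pi}$ equals $(\pi_i)_j$, and with the property that $j\mapsto\phi(i,j)$ is increasing for every fixed $i$ (by the non-crossing property of the $J_{i,\cdot}$). Thus $\phi$ exhibits $\overline{\pi}$ as a constituent of the shuffle product $\overline{\pi_1}\otimes_S\cdots\otimes_S\overline{\pi_n}$. By hypothesis the $\overline{\pi_i}$ are pairwise distinct Lyndon words arranged in lexicographically decreasing order, so Lemma~\ref{lm:shuffle} tells us that the coefficient of the term $\overline{\pi_1}\,\overline{\pi_2}\cdots\overline{\pi_n}$ in this shuffle product is exactly one. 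Hence there is only one such $\phi$, namely the canonical one given by $\phi(i,j)=m_1+\cdots+m_{i-1}+j$. Consequently $J_i$ is the union of $m_i$ consecutive indecomposable blocks of $\pi$ starting at position $m_1+\cdots+m_{i-1}+1$, which is exactly the interval $[\,|\pi_i|\,]+|\pi_1|+\cdots+|\pi_{i-1}|$.

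The main obstacle is the bookkeeping in the middle step: recognising that the indecomposable-block containment is forced to be equality (using the matching totals $m_1+\cdots+m_n$) and correctly translating the resulting combinatorial structure into a shuffle-product decomposition so that Lemma~\ref{lm:shuffle} can be invoked. Once that translation is in place, the uniqueness of the leading term of the shuffle product does all the work.
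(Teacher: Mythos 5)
Your proof is correct and follows essentially the same route as the paper's: partition each $J_i$ into non-crossing pieces $J_{i,j}$ inducing the indecomposable blocks of $\pi_i$, observe that each $J_{i,j}$ must sit inside a single indecomposable block of $\pi$ and that matching totals force it to \emph{equal} such a block, thereby realizing $\overline{\pi}$ as a constituent of the shuffle product, and then invoke Lemma~\ref{lm:shuffle}. If anything, your write-up is slightly more explicit than the paper in spelling out that the coefficient-one clause of Lemma~\ref{lm:shuffle} (rather than just the lexicographic-maximality clause) is what delivers uniqueness of the bijection $\phi$ and hence pins down the $J_i$.
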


\begin{proof}
Fix the permutations $\pi_1,\ldots,\pi_n$ and the subsets $J_1,\ldots,J_n$ as in the statement of the lemma.
For $i\in [n]$,
let $m_i$ be the number of indecomposable blocks of $\pi_i$ and
let $\sigma_{i,1},\ldots,\sigma_{i,m_i}$ be indecomposable permutations such that
$\pi_i=\sigma_{i,1}\oplus\cdots\oplus\sigma_{i,m_i}$.
Note that the permutation $\pi_1\oplus\cdots\oplus\pi_n$ has $m_1+\cdots+m_n$ indecomposable blocks.
Further, let $J_{i,1},\ldots,J_{i,m_i}$ be the unique partition of $J_i$ into non-crossing disjoint sets such that
$J_{i,j}$ contains the indices that induce the pattern $\sigma_{i,j}$, $j\in [m_i]$.
Since the pattern induced by $J_{i,j}$, $i\in [n]$ and $j\in [m_i]$, is an indecomposable permutation,
the indices contained in $J_{i,j}$ are contained in the same indecomposable block of the permutation $\pi_1\oplus\cdots\oplus\pi_n$.
Since the sets $J_{i,j}$, $i\in [n]$ and $j\in [m_i]$, partition the set $[N]$ and
the number of indecomposable blocks of $\pi_1\oplus\cdots\oplus\pi_n$ is $m_1+\cdots+m_n$,
it follows that each set $J_{i,j}$, $i\in [n]$ and $j\in [m_i]$,
is the set of indices of one of the indecomposable blocks of $\pi_1\oplus\cdots\oplus\pi_n$.
In particular, indecomposable blocks of the permutations $\pi_1,\ldots,\pi_n$
one-to-one correspond to indecomposable blocks of the permutation $\pi_1\oplus\cdots\oplus\pi_n$.
Since $\overline{\pi_1\oplus\cdots\oplus\pi_n}$
is the lexicographically largest constituent in the shuffle product $\overline{\pi_1}\otimes_S\cdots\otimes_S\overline{\pi_n}$ and
the coefficient at this constituent is one by Lemma~\ref{lm:shuffle} (note that $\pi_1,\ldots,\pi_n$ are pairwise distinct),
it follows that
each $J_i$, $i\in [n]$, is the set of indices corresponding to $\pi_i$ in $\pi_1\oplus\cdots\oplus\pi_n$.
The statement of the lemma now follows.
\end{proof}

We are now ready to prove our main result, Theorem~\ref{thm:lower}.

\begin{proof}[Proof of Theorem~\ref{thm:lower}]
Fix an integer $k\ge 2$.
Let $N$ be the number of non-trivial Lyndon permutations of size at most $k$, i.e. $N=\lvert\PP^L_k\rvert$, and
let $\pi_1,\ldots,\pi_{N+1}$ be all Lyndon permutations of size at most $k$ listed in a way that $\pi_1>_L\cdots>_L\pi_{N+1}$;
note that $\pi_1=k(k-1)\dots 1$ and $\pi_{N+1}=1$.
Let $n_i$ be the size of the permutation $\pi_i$, $i\in [N+1]$.

We next define a family of permutons parameterized by $N+(n_{1}+\cdots+n_{N})$ variables,
namely by $s_1,\ldots,s_{N}$ and $t_{i,1},\ldots,t_{i,n_{i}}$ for $i\in [N]$. For brevity, we will sometimes refer to $s_1, \dots, s_N$ as the \textit{$s$-variables} and to $t_{1, 1}, \dots, t_{N, n_N}$ as the \textit{$t$-variables}.
The parameters will be positive reals such that $s_1+\cdots+s_N<1$ and
$t_{i,1}+\cdots+t_{i,n_{i}}<1$ for every $i\in [N]$.
The permuton $\Pi^L(s_1,\ldots,s_{N},t_{1,1},\ldots,t_{N,n_{N}})$
is the blow-up of the permutation $\pi_1\oplus\cdots\oplus\pi_{N+1}$
such that the $n_1+\cdots+n_{N+1}$ parts of the blow-up are scaled
by the factors
\[s_1t_{1,1},\ldots,s_1t_{1,n_1},\; s_2t_{2,1},\ldots,s_2t_{2,n_2},\; \ldots,\; s_Nt_{N,1},\ldots,s_Nt_{N,n_N},\; z\]
where $z= 1-\sum\limits_{i=1}^{N}\sum\limits_{j=1}^{n_i}s_it_{i, j}$.
We illustrate the construction of a permuton $\Pi^L$ for $k=3$.
There are five non-trivial Lyndon permutations of size at most $3$ (and so six Lyndon permutations of size at most $3$ in total),
i.e., $N=5$, $\pi_1=321$, $\pi_2=312$, $\pi_3=231$, $\pi_4=21$, $\pi_5=132$ and $\pi_6=1$.
Hence, the permuton $\Pi^L$ is parameterized
by $19$ variables $s_1,\ldots,s_5$ and $t_{i,j}$ for $(i,j)\in [5]\times [3]\setminus \{(4,3)\}$.
The permuton $\Pi^L$ itself is visualized in Figure~\ref{fig:PiL3}.

\begin{figure}
\begin{center}
\epsfbox{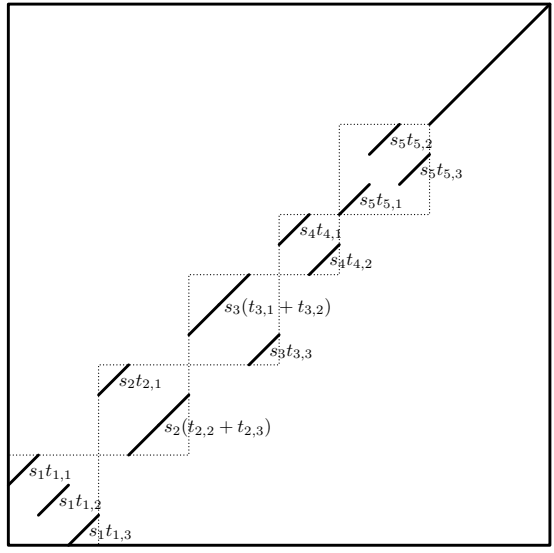}
\end{center}
\caption{The permuton $\Pi^L_3$.}
\label{fig:PiL3}
\end{figure}

Fix $i\in [N]$ and consider the permutation $\pi_i$.
Observe that the probability that
$n_i$ points sampled based on the permuton $\Pi^L$ form the pattern $\pi_i$,
conditioned on at least one point sampled from the part corresponding to $\pi_{N+1}$,
is zero, as none of the permutations $\pi_1,\ldots,\pi_N$ ends with an indecomposable block of size one.
It follows that the density $d(\pi_i,\Pi^L)$ is a homogeneous polynomial of order $2n_i$ such that
each term of the polynomial contains $n_i$ variables $s_1,\ldots,s_N$ (possibly with multiplicities) and
$n_i$ variables $t_{1,1},\ldots,t_{N,n_N}$ (again possibly with multiplicities);
each monomial corresponds to one of the possible choices of points from parts of the permuton $\Pi^L$ that yields $\pi_i$.
Moreover, if each of the $t$-variables appears once in the monomial,
then the pattern of $\pi_1\oplus\cdots\oplus\pi_N$ induced by the elements corresponding to the $t$-variable of the monomial
is $\pi_i$, and
every pattern of $\pi_1\oplus\cdots\oplus\pi_N$ that is $\pi_i$ is associated with one such monomial.

We now analyze the Jacobian matrix $\mathcal{J}$ of the function $\left(d(\pi_1,\Pi^L),\ldots,d(\pi_N,\Pi^L)\right)$
with respect to the variables $s_1,\ldots,s_N$,
i.e.
\[\mathcal{J}_{i, j}=\frac{\partial}{\partial s_j}d\left(\pi_i,\Pi^L(s_1,\ldots,s_{N},t_{1,1},\ldots,t_{N,n_{N}})\right).\]
Since the density $d(\pi_i,\Pi^L)$ is a homogeneous polynomial of order $2n_i$ with $n_i$ $s$-variables and $n_i$ $t$-variables,
the Jacobian determinant
\begin{equation}
\det(\mathcal{J})=\sum_{\sigma\in S_N} \sgn(\sigma) \prod_{i=1}^N \mathcal{J}_{i, \sigma(i)}
\label{eq:detJ}
\end{equation}
is a homogeneous polynomial of order $2(n_1+\cdots+n_N)-N$ such that
each of its summands is a monomial with $n_1+\cdots+n_N-N$ $s$-variables and
$n_1+\cdots+n_N$ $t$-variables (counted with multiplicity, in both cases).
We next investigate those monomials of the Jacobian determinant that
contain all of the $n_1+\cdots+n_N$ $t$-variables,
i.e. each of the $t$-variables appears once in the monomial.

Consider a permutation $\sigma$ in \eqref{eq:detJ} and
note that the monomials that occur in $\prod_{i=1}^N\mathcal{J}_{i, \sigma(i)}$ arise as
products of monomials that occur in each of the $\mathcal{J}_{i, \sigma(i)}$.
For such a product of monomials,
let $I_i$ be the multiset of (double) indices of the variables $t_{1,1},\ldots,t_{N,n_N}$
contained in the monomial occurring in $\mathcal{J}_{i, \sigma(i)}$.
Recall that each monomial of the polynomial $d(\pi_i,\Pi^L)$
corresponds to one of the possible choices of points from parts of the permuton $\Pi^L$ that yields $\pi_i$, and
the chosen parts are determined by the variables contained in the monomial, and
we never use the part of $\Pi^L$ corresponding to $\pi_{N+1}$ (as argued earlier).
If the term in the product in \eqref{eq:detJ} associated with $I_1,\ldots,I_N$
contains each of the $t$-variables exactly once,
then the term corresponds to sampling at most one point from each part of $\Pi^L$ and
so no double index occurs twice in $I_1\cup\cdots\cup I_N$ (and each $I_i$ is actually a set rather than just a multiset).
Since an index $(a, b)\in I_i$ corresponds to the $b$-th element of the permutation $\pi_a$
in $\pi_1\oplus\cdots\oplus\pi_{N}$,
we can map each set $I_i$ to a set $J_i\subseteq [ n_1+\cdots +n_N]$ of (single) indices
indicating the elements of $\pi_1\oplus\cdots\oplus\pi_{N+1}$ inducing the pattern $\pi_i$;
the mapping is simply $(a, b) \rightarrow \sum_{j=1}^{a-1}|\pi_j| + b$.
Hence, if the term in the product in \eqref{eq:detJ} associated with $I_1,\ldots,I_N$
contains each of the $t$-variables exactly once,
the sets $J_1,\ldots,J_N$ form a partition of $[n_1+\ldots+n_N]$, and
the pattern of $\pi_1\oplus\cdots\oplus\pi_N$ induced by $J_i$ is $\pi_i$ for every $i\in [N]$.

Since the sets $J_1,\ldots,J_N$ are disjoint and
the pattern of $\pi_1\oplus\cdots\oplus\pi_N$ induced by $J_i$ is $\pi_i$ for every $i\in [N]$,
Lemma~\ref{lm:lyndon} yields that
each of the sets $J_i$ is an interval that
corresponds to the entries of $\pi_i$ in the permutation $\pi_1\oplus\cdots\oplus\pi_N$.
It follows that the only permutation $\sigma$ such that $\prod_{i=1}^{N}\mathcal{J}_{i, \sigma(i)}$
yields a monomial term containing all $t$-variables is the identity permutation of size $N$, and
the said monomial is obtained precisely by multiplying the terms $s_i^{n_i-1}t_{i,1}\cdots t_{i,n_i}$
in the polynomials $\frac{\partial}{\partial s_i}d\left(\pi_i,\Pi^L\right)$, $i\in [N]$.
Note that the coefficient of the term $s_i^{n_i}t_{i,1}\cdots t_{i,n_i}$ in $d\left(\pi_i,\Pi^L\right)$
is equal to
\[\ell_{i,1}!\ell_{i,2}!\cdots\ell_{i,m_i}!\]
where $m_i$ is the number of increasing segments of $\pi_i$ and $\ell_{i,1},\ldots,\ell_{i,m_i}$ are their lengths;
in particular, the coefficient is non-zero.
We conclude that the coefficient of the term
\[\prod_{i=1}^N s_i^{n_i-1}\prod_{j=1}^{n_i}t_{i,j}\]
in the determinant of $\mathcal{J}$ is non-zero.

Since the Jacobian determinant is a polynomial that is not identically zero,
there exists a choice of positive reals $s_1,\ldots,s_N$, $s_1+\cdots+s_N<1$, and
positive reals $t_{1,1},\ldots,t_{N,n_N}$, $t_{i,1}+\cdots+t_{i,n_{i}}<1$ for every $i\in [N]$, such that
the Jacobian determinant is non-zero.
It follows using the Inverse Function Theorem that
the point $x_0\in [0,1]^{\PP^L_k}$ such that
\[x_0=d\left(\pi,\Pi^L(s_1,\ldots,s_{N},t_{1,1},\ldots,t_{N,n_{N}})\right)_{\pi\in\PP^L_k}\]
satisfies the statement of the theorem.
\end{proof}

\section{Upper bound}
\label{sec:upper}

We start by proving a lemma on the interplay of the flag product of permutations and Lyndon permutations.
Note that Proposition~\ref{prop:lyndon} ensures that
the decomposition of any permutation $\pi$ into $\pi_1,\ldots,\pi_n$ as described in the statement of the next lemma
always exists and is unique.

\begin{lemma}
\label{lm:flag}
Let $\pi$ be a permutation and
let $(\pi_1,\ldots,\pi_n)$ be the unique ordered tuple of permutations such that 
\begin{itemize}
\item $\pi=\pi_1\oplus\cdots\oplus\pi_n$,
\item the words $\overline{\pi_1},\ldots,\overline{\pi_n}$ are Lyndon, and
\item the sequence $\overline{\pi_1},\ldots,\overline{\pi_n}$ is lexicographically non-increasing, i.e., $\pi_1\ge_L\cdots\ge_L\pi_n$.
\end{itemize}
If a permutation $\sigma$ is a constituent in the flag product $\pi_1\times\cdots\times\pi_n$ and $\sigma\not=\pi$,
then either
\begin{itemize}
\item $\sigma$ has fewer indecomposable blocks than $\pi$, or
\item $\sigma$ and $\pi$ have the same number of indecomposable blocks and
      $\overline{\sigma}$ is lexicographically smaller than $\overline{\pi}$.
\end{itemize}
\end{lemma}

\begin{proof}
Fix permutations $\pi_1,\ldots,\pi_n$.
Let $N$ be the sum of the sizes of $\pi_1,\ldots,\pi_n$, and
let $\sigma$ be a constituent in the flag product $\pi_1\times\cdots\times\pi_n$.
Hence, there exists a partition of $[N]$ into sets $J_1,\ldots,J_n$ such that
the pattern induced by $J_i$ in $\sigma$ is $\pi_i$, $i\in [n]$.
For $i\in [n]$,
let $m_i$ be the number of indecomposable blocks of $\pi_i$ and
let $J_{i,1},\ldots,J_{i,m_i}$ be the unique partition of $J_i$ into pairwise non-crossing disjoint sets such that
$J_{i,j}$, $j\in [m_i]$, contains the indices that induce in $\sigma$ the $j$-th indecomposable block of $\pi_i$.
Observe that each set $J_{i,j}$, $i\in [n]$ and $j\in [m_i]$,
is a subset of an indecomposable block of $\sigma$.
Hence,
either two different sets $J_{i,j}$ belong to the same indecomposable block of $\sigma$ and
so $\sigma$ has fewer indecomposable blocks than $\pi$ and we arrive at the first conclusion of the lemma, or
each set $J_{i,j}$, $i\in [n]$ and $j\in [m_i]$, belongs to a different indecomposable block of $\sigma$.

In the latter case,
since the sets $J_{i,j}$ partition $[N]$,
each of them contains indices of one of the indecomposable blocks of $\sigma$;
in particular, each set $J_{i,j}$ is an interval.
Therefore, the words $\overline{\pi_1\oplus\cdots\oplus\pi_n}$ and $\overline{\sigma}$ have the same length and
consist of the same multiset of letters.
Moreover, since the sets $J_{i,1},\ldots,J_{i,m_i}$ appear in the order of their second indices for every $i\in [n]$,
$\overline{\sigma}$ is a constituent in the shuffle product $\overline{\pi_1}\otimes_S\cdots\otimes_S\overline{\pi_n}$.
By Lemma~\ref{lm:shuffle},
the lexicographically largest constituent in the shuffle product $\overline{\pi_1}\otimes_S\cdots\otimes_S\overline{\pi_n}$
is $\overline{\pi_1\oplus\cdots\oplus\pi_n}=\overline{\pi}$, and
so $\overline{\sigma}$ is lexicographically smaller than $\overline{\pi}$ unless $\sigma=\pi$.
\end{proof}

We are now ready to prove the main theorem of this section.
As discussed in Section~\ref{sec:intro},
the statement was presented by Borga and the last author in~\cite{BorP20} with a sketch of a possible proof;
we now present a different (in our view more elementary) proof for completeness.

\begin{theorem}
\label{thm:upper}
For every integer $k\ge 2$,
there exists a polynomial function $f:[0,1]^{\PP^L_k}\to [0,1]^{\PP_k}$ such that
\[f\left(\left(d(\sigma,\Pi)\right)_{\sigma\in\PP^L_k}\right)=(d(\pi,\Pi))_{\pi\in\PP_k}\]
for every permuton $\Pi$.
\end{theorem}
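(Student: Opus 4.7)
The plan is to assemble the polynomials produced by Lemma~\ref{lm:upper} coordinate-wise. For each permutation $\pi\in\PP_k$, Lemma~\ref{lm:upper} (applied with $|\pi|$ in the role of $k$) yields a polynomial $p_\pi$ in variables $x_\sigma$ indexed by Lyndon permutations $\sigma$ of size at most $|\pi|\le k$ that satisfies $p_\pi((d(\sigma,\Pi))_\sigma)=d(\pi,\Pi)$ for every permuton $\Pi$. Each such variable is indexed by a Lyndon permutation of size at most $k$, so I would regard $p_\pi$ as a polynomial in the variables indexed by all Lyndon permutations of size at most $k$, with the absent variables simply not appearing.

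The only subtlety is that Lemma~\ref{lm:upper} allows the variable $x_1$ corresponding to the trivial permutation of size one, whereas $\PP^L_k$ consists of \emph{non-trivial} Lyndon permutations only. Since $d(1,\Pi)=1$ for every permuton $\Pi$, I would substitute $x_1:=1$ in each $p_\pi$ to obtain a polynomial $\tilde p_\pi$ whose variables are indexed precisely by $\PP^L_k$.

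Defining $f(\vec x):=(\tilde p_\pi(\vec x))_{\pi\in\PP_k}$ then gives the required polynomial map. Indeed, Lemma~\ref{lm:upper} guarantees that
\[f\bigl((d(\sigma,\Pi))_{\sigma\in\PP^L_k}\bigr)=\bigl(\tilde p_\pi((d(\sigma,\Pi))_{\sigma\in\PP^L_k})\bigr)_{\pi\in\PP_k}=(d(\pi,\Pi))_{\pi\in\PP_k}\]
for every permuton $\Pi$, and the right-hand side lies in $[0,1]^{\PP_k}$ since pattern densities always do.

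There is no genuine obstacle here: the combinatorial content has already been established in Lemma~\ref{lm:upper}, and Theorem~\ref{thm:upper} is its coordinate-wise packaging. The only care needed is the bookkeeping around the variable $x_1$, resolved cleanly by the specialization above, and the observation that a polynomial in finitely many variables of size at most $k$ remains a polynomial when viewed in the larger variable set indexed by all of $\PP^L_k$.
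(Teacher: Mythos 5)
Your proposal is correct and follows essentially the same route as the paper: invoke Lemma~\ref{lm:upper} coordinate-wise and eliminate the variable $x_1$ by substituting $d(1,\Pi)=1$. Nothing of substance differs.
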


Theorem~\ref{thm:upper} follows from the next lemma (note that $d(1,\Pi)=1$ for every permuton $\Pi$).

\begin{lemma}
\label{lm:upper}
Let $\pi$ be a permutation of size $k\ge 2$.
There exists a polynomial $p_{\pi}$ in variables $x_{\sigma}$ indexed by non-trivial Lyndon permutations $\sigma$ of size at most $k$ such that,
for every permuton $\Pi$,
the density $d(\pi,\Pi)$ of $\pi$ in $\Pi$
is equal to the value of $p_{\pi}$ evaluated at $\vec x=(d(\sigma,\Pi))_{\sigma\in \PP_k^L}$.
\end{lemma}

\begin{proof}
Consider the following linear order on all permutations:
$\pi<\tau$ if either $\lvert\pi\rvert<\lvert\tau\rvert$,
$\lvert\pi\rvert=\lvert\tau\rvert$ and $\pi$ has fewer indecomposable blocks than $\tau$, or
$\lvert\pi\rvert=\lvert\tau\rvert$, $\pi$ and $\tau$ have the same number of indecomposable blocks and $\pi<_L\tau$.
By slightly abusing notation, we extend the statement of the lemma to $k=1$ (if $k=1$,
the polynomial $p_1$ depends on no variables, i.e., it is a constant), and
prove the extended statement by induction on the linear order $<$.
The base of the induction is the permutation $\pi=1$ and
we set $p_1$ to be constantly equal to $1$.

We now present the induction step.
Consider a permutation $\pi$ of size $k\ge 2$.
If $\pi$ consists of a single indecomposable block,
then $\pi$ is Lyndon and we set $p_{\pi}(\vec x)=x_{\pi}$.
In the rest, we assume that $\pi$ consists of two or more indecomposable blocks and
let $\pi_1,\ldots,\pi_n$ be the Lyndon permutations such that
$\pi=\pi_1\oplus\cdots\oplus\pi_n$ and $\pi_1\ge_L\cdots\ge_L\pi_n$;
such permutations exist and are unique by Proposition~\ref{prop:lyndon}.
By \eqref{eq:product}, it holds that
\[d(\pi_1\times\cdots\times\pi_n, \Pi)=d(\pi_1,\Pi)d(\pi_2,\Pi)\cdots d(\pi_n,\Pi).\]
By Lemma~\ref{lm:flag}, $d(\pi_1\times\cdots\times\pi_n, \Pi)$
is equal to a linear combination (with fixed coefficients) of densities $d(\sigma,\Pi)$ of permutations $\sigma$ of size $k$
such that $\sigma\leq \pi$.
Note that the coefficient of $d(\pi, \Pi)$ in this linear combination is non-zero.

By induction,
for each permutation $\sigma<\pi$,
we can express $d(\sigma, \Pi)$ as a polynomial in densities of non-trivial Lyndon permutations of size at most $k$.
Substituting these polynomials for all $\sigma<\pi$
in the linear combination of densities $d(\sigma,\Pi)$ that is equal to $d(\pi_1\times\cdots\times\pi_n, \Pi)$,
we obtain an identity that contains a term linear in $d(\pi,\Pi)$ and
terms polynomial in densities of non-trivial Lyndon permutations of size at most $k$.
This yields the existence of the sought polynomial $p_{\pi}$.
\end{proof}

\bibliographystyle{bibstyle}
\bibliography{permdim}
\end{document}